\theoremstyle{plain}
\newtheorem{theorem}{Theorem}[section]
\newtheorem{proposition}[theorem]{Proposition}
\newtheorem{lemma}[theorem]{Lemma}
\newtheorem{corollary}[theorem]{Corollary}
\theoremstyle{definition}
\theoremstyle{remark}
\theoremstyle{example}
\newtheorem{example}[theorem]{Example}
\numberwithin{equation}{section}
\newcommand\QQ{\mathbb{Q}}
\newcommand\ZZ{\mathbb{Z}}
\newcommand\Par{\mathrm{Par}}
\newcommand\up{\mathrm{up}}
\newcommand\Inv{\mathrm{Inv}}
\newcommand\Quinv{\mathrm{Quinv}}
\newcommand\inv{\mathrm{inv}}
\newcommand\arm{\mathrm{arm}}
\newcommand\leg{\mathrm{leg}}
\newcommand\maj{\mathrm{maj}}
\newcommand{\rev}{\mathrm{rev}}
\newcommand{\wt}{\mathrm{wt}}
\newcommand{\Area}{\mathrm{Area}}
\newcommand{\coleg}{\mathrm{coleg}}
\newcommand\dinv{\mathrm{dinv}}
\newcommand\WP{\mathcal{WP}}
\newcommand{\len}{\mathrm{len}}
\newcommand{\NEpath}[4]{
%	\usetikzlibrary{calc}
	\fill[white!25]  (#1) rectangle +(#2,#3);
	\fill[fill=white]
	(#1)
	\foreach \dir in {#4}{
		\ifnum\dir=0
		-- ++(1,0)
		\else
		-- ++(0,1)
		\fi
	} |- (#1);
	\draw[help lines] (#1) grid +(#2,#3);
	\draw[dashed] (#1) -- +(#3,#3);
	% Numbering columns below
	\foreach \x in {1,...,#2} {
		\node[below,gray] at ($(#1) + (\x-0.5, -0.3)$) {\x};
	}
	
	% Numbering rows to the left
	\foreach \y in {1,...,#3} {
		\node[left,gray] at ($(#1) + (-0.3, \y-0.5)$) {\y};
	}
	\coordinate (prev) at (#1);
	\foreach \dir in {#4}{
		\ifnum\dir=0
		\coordinate (dep) at (1,0);
		\else
		\coordinate (dep) at (0,1);
		\fi
		\draw[line width=2pt,-stealth] (prev) -- ++(dep) coordinate (prev);
	};
}
\title[]{Equating Inv-Quinv formulas for the \\$q$-Whittaker and modified Hall-Littlewood functions}
\author{Aritra Bhattacharya}
\address{Beijing International Center for Mathematical Research, Peking University,
	Beijing, 100871, China}
\email{matharitra@gmail.com}
\keywords{Hall-Littlewood functions, $q$-Whittaker functions, symmetric functions, Dyck paths}
\subjclass{05E05}
\date{\today}
\begin{document}
	
	\maketitle
	
	\begin{abstract}
		We explain the equality between the two sets of formulas for $q$-Whittaker functions and modified Hall-Littlewood functions obtained by Haglund, Haiman and Loehr - the Inv formula and Ayyer, Mandelshtam and Martin - the Quinv formula by use of weighted path symmetric functions introduced by Carlsson and Mellit.
	\end{abstract}
	
	\section{Introduction}
	The $q$-Whittaker functions $W_\lambda(q)$ and the modified Hall-Littlewood functions $\widetilde{H}_\lambda(q)$ are symmetric functions depending on a parameter $q$. They are respectively the lowest degree term and the highest degree term with respect to the parameter $t$ in the modified Macdonald symmetric functions $\widetilde{H}_\lambda(q,t)$: \begin{equation}
		\widetilde{H}_\lambda(q,t) = W_\lambda(q)\cdot t^{n(\lambda)}+\ldots+\widetilde{H}_{\lambda'}(q)\cdot t^{0}.
	\end{equation}
	  The combinatorial formulas for monomial expansions of $\widetilde{H}_\lambda(q,t)$ given by Haglund-Haiman-Loehr in \cite{HHL-I} and Ayyer-Mandelshtam-Martin in \cite{AMM-I} gives two different sets of formulas for each of the symmetric functions mentioned above. In this note we give an explanation of the fact that the two sets of formulas for the $q$-Whittaker functions and the modified Hall-Littlewood functions agree, through the use of weighted Dyck path symmetric functions introduced by Carlsson and Mellit in their proof of the Shuffle conjecture \cite{Carlsson-Mellit-Shuffle}.  Following their Example 3.10, for a partition $\lambda$, we define a Dyck path $\pi^\Inv_\lambda$ that captures the Haglund-Haiman-Loehr formulas with the $q$-weight given by the $\Inv$ statistic, \eqref{eq:HHL}. Analogously, we define another path $\pi^\Quinv_\lambda$ that captures the Ayyer-Mandelshtam-Martin formulas with the $q$-weight given by the $\Quinv$ statistic, \eqref{eq:AMM}. \autoref{res:pathInvQuinvrel} shows that these two paths on a partition $\lambda$ are related by the maps $\zeta, \zeta^{-1}$ and the reversal maps. The maps $\zeta$ and $\zeta^{-1}$ are fundamental in the theory around Dyck paths and related symmetric functions (see for example \cite{HaglundqtCatbook}). The equality of the two sets of formulas for $q$-Whittaker and modified Hall-Littlewood functions in \autoref{th:mainth} then follow from simple transformation rules of the path symmetric functions. An alternate proof is obtained in \cite{BRVqWhittaker}.
	  
	  We recall the notations related to Dyck paths, and refer the reader to \cite{MacMainBook} and \cite{HHL-I} for undefined terminology concerning partitions and symmetric functions.
	 
	\section{Main Content}
	A Dyck path of semilength $n$ is a lattice path from $(0,0)$ to $(n,n)$ consisting of unit length north steps $N$ and unit length east steps $E$ such that the path always stays weakly above the diagonal $x=y$. We write the cell co-ordinate of each box in the $n\times n$ grid from $(0,0)$ to $(n,n)$ inside $\ZZ_{\geq 0} \times \ZZ_{\geq 0}$ as the co-ordinate of its north-east corner. $\Area(\pi)$ is the set of cells below $\pi$ above the diagonal. The Dyck path $\pi$ is uniquely determined by $\Area(\pi)$. Denote by $x_i(\pi)$ the $x$-co-ordinate of the cell immediately right of the $i$-th $N$ step. The set of corners $c(\pi)$ of $\pi$ are the cells which are above the path but both its eastern and southern neighbors are below the path. \autoref{fig:pi} illustrates these notions.
	
	\begin{figure}[h]
		\begin{tikzpicture}
			\NEpath{0,0}{6}{6}{1,1,0,1,0,0,1,1,0,1,0,0};
		\end{tikzpicture}
		\begin{align*}
			&\Area(\pi) = \{(1,2),(2,3),(4,5),(5,6)\}, \qquad 
			c(\pi) = \{(1,3),(3,4),(4,6)\},
			\\
			&\qquad x_1(\pi) = 1, x_2(\pi) = 1, x_3(\pi) = 2, x_4(\pi) = 4, x_5(\pi) = 4, x_6(\pi) = 5.
		\end{align*}
		\caption{Example of a Dyck path}
		\label{fig:pi}
	\end{figure}
	
%	Let $\pi$ be a Dyck path of semilength $n$. Let $c(\pi)$ denote the inner corners of $\pi$. 
%	
	
	The \textit{reversal map} $\rev$ on the set of Dyck paths is defined by reading the Dyck path from right to left and interchanging the $E$ and $N$ steps. \autoref{fig:reveg} gives an example. If $\pi$ has semilength $n$ then \begin{equation}\label{eq:Arearev}
		\Area(\rev(\pi)) = \{(n+1-j,n+1-i)\,|\, (i,j) \in \Area(\pi)\}.
	\end{equation}
	
	\begin{figure}[h]
		\centering
		\begin{minipage}{0.45 \textwidth} 
		\begin{tikzpicture}
			\NEpath{0,0}{4}{4}{1,0,1,1,0,1,0,0};
		\end{tikzpicture}
		\subcaption{$\pi$}
		\end{minipage}
		\begin{minipage}{0.45 \textwidth}
			\begin{tikzpicture}
				\NEpath{0,0}{4}{4}{1,1,0,1,0,0,1,0};
			\end{tikzpicture}
			\subcaption{$\rev(\pi)$}
		\end{minipage}
	\caption{}
	\label{fig:reveg}
	\end{figure}
	
	Now we recall the definition of $\zeta$ map on the set of all Dyck paths defined in \cite[\S 2.4]{Haglund-Xin-shuffle}. First, write a \textit{reading label} on the cell to the right of each $N$ step of $\pi$ as follows: label the $N$ steps in the lowest diagonal from left to right first, then label the $N$ steps in second lowest diagonal from left to right and so on. The box with reading label $i$  \textit{dinv-attacks} the box with reading label $j$ if $i<j$ and they are in the same diagonal or the box with label $j$ is in one diagonal above and to the left of the box with label $i$. Write $\boxed{i} \rightarrow \boxed{j}$ if in the reading label, the box labelled $i$ dinv-attacks the box labelled $j$. Construct the path $\zeta(\pi)$ such that \begin{equation}\label{eq:zetadef}
		\Area(\zeta(\pi)) = \{(i,j)\,|\, \boxed{i} \rightarrow \boxed{j} \hbox{ in } \pi \}.
	\end{equation} \autoref{fig:zetamapeg} gives an example. The reading label of $\pi$ determines a permutation $\sigma_\pi$ obtained by reading the labels of each column from bottom to top starting with the leftmost column.
	
	\begin{figure}[h]
		\centering
		\begin{minipage}{0.45 \textwidth}
			\begin{tikzpicture}
				\NEpath{0,0}{6}{6}{1,1,0,1,0,0,1,1,0,1,0,0};
				\node at (0.5,0.5) {1};
				\node at (0.5,1.5) {3};
				\node at (1.5,2.5) {4};
				\node at (3.5,3.5) {2};
				\node at (3.5,4.5) {5};
				\node at (4.5,5.5) {6};
			\end{tikzpicture}
			\subcaption{$\pi$ with reading labels, \\ $\sigma_\pi = 134256$}
		\end{minipage}
		\begin{minipage}{0.45 \textwidth}
		\begin{tikzpicture}
			\NEpath{0,0}{6}{6}{1,1,0,1,1,0,1,1,0,0,0,0};
		\end{tikzpicture}
		\subcaption{$\zeta(\pi)$ \\ $c(\zeta(\pi)) = \{(1,3),(2,5)\}$}
		\end{minipage}
		\caption{}
		\label{fig:zetamapeg}
	\end{figure}
	
%	The dinv-attacking pairs of $\pi$ are $\{(1,2),(2,3),(2,4),(3,4),(3,5),(3,6),(4,5),(4,6),(5,6)\}$, which is the same as $\Area(\zeta(\pi))$.

	For a word $w \in \ZZ_{>0}^n$, let  \begin{equation}
		\inv(\pi,w) = \# \{(i,j) \in \Area(\pi)\,|\, w_i>w_j \},
	\end{equation} and \begin{equation}
	\dinv(\pi,w) = \#\{(i,j)\,|\,\boxed{i}\rightarrow\boxed{j} \hbox{ and } w_i>w_j\}.
	\end{equation}
	
	Let $\mathbb{K}$ be a ring containing $q$ and for a weight function $\wt : c(\pi) \to \mathbb{K}$, \cite[(3.4)]{Carlsson-Mellit-Shuffle} defines the weighted characteristic function $$ \chi(\pi,q,\wt) = \sum_{w \in \ZZ^n_{>0}} q^{\inv(\pi,w)}\bigg( \prod_{\substack{(i,j) \in c(\pi) \\ w_i \leq w_j}} \wt(i,j)  \bigg) x_w,$$ where for $w = (w_1,\ldots,w_n) \in \ZZ_{>0}^n$, $x_w = x_{w_1}\ldots x_{w_n}$. \cite[Proposition 3.7]{Carlsson-Mellit-Shuffle} says that $\chi(\pi,q,\wt)$ is a symmetric function with coefficients in $\mathbb{K}$. 

In particular, we will focus on the case when $\wt$ is the constant function $t$: \begin{equation}
	\chi(\pi,q,t) = \sum_{w \in \ZZ_{>0}^n} q^{\inv(\pi,w)}t^{\#\{(i,j)\in c(\pi)\,|\, w_i \leq w_j\}}x_w.
\end{equation} 

Let \begin{align}
	\WP_\pi(>) &= \{w \in \ZZ_{>0}^n\,|\, w_j > w_{j+1} \hbox{ whenever } x_j(\pi) = x_{j+1}(\pi)\},
	\\
	\WP'_\pi(>) &= \{w \in \ZZ_{>0}^n \,|\, w_i > w_j \hbox{ whenever } (i,j)\in c(\pi) \}, 
	\\
	\WP_\pi(\leq) &= \{w \in \ZZ_{>0}^n\,|\, w_j \leq w_{j+1} \hbox{ whenever } x_j(\pi) = x_{j+1}(\pi)\},
	\\
	\WP'_\pi(\leq) &= \{w \in \ZZ_{>0}^n \,|\, w_i \leq w_j \hbox{ whenever } (i,j)\in c(\pi) \}.
\end{align}

We picture elements of $\WP_\pi(>)$ (resp. $\WP_\pi(\leq)$) as fillings with positive integers on the boxes immediately right of every north step in the Dyck path $\pi$, such that in every column the entries decrease (resp. weakly increase) from bottom to top.  Similarly, the elements of $\WP_\pi'(>)$ (resp. $\WP_\pi'(\leq)$) are fillings with positive integers on the boxes on the diagonal $x=y$, such that if $(i,j)$ is a corner then the entry in the $i$th column is strictly bigger (resp. weakly smaller) than the entry in the $j$th row. 
	
Then we can extract the lowest and highest $t$-degree terms in $\chi(\pi,q,t)$ as \begin{equation}
	\chi(\pi,q,0) = \sum_{\WP'_\pi(>)} q^{\inv(\pi,w)} x_w \qquad \hbox{ and } \qquad \chi(\pi,q,t)\big|_{t^{\# c(\pi)}} = \sum_{\WP'_\pi(\leq)} q^{\inv(\pi,w)} x_w.
\end{equation}

Define \begin{equation}\label{eq:barchidef}
	\overline{\chi}(\pi,q,t) = \chi(\zeta(\pi),q,t).
\end{equation}

%Define \begin{align*}
%	\zeta(\pi,w) = (\zeta(\pi),w\circ\sigma^\pi = (w_{\sigma^\pi_1},\ldots, w_{\sigma^\pi_n}))
%\end{align*}
%Then 
\cite[\S 2.4]{Carlsson-Mellit-Shuffle}, \cite[\S 2.4]{Haglund-Xin-shuffle} says
$$ \inv(\zeta(\pi),w\circ (\sigma^\pi)^{-1}) = \dinv(\pi,w) $$ and \cite[Proposition 2.2]{Haglund-Xin-shuffle} says
\begin{equation}\label{eq:czetapi}
	c(\zeta(\pi)) = \{(\sigma^\pi_r,\sigma^\pi_{r+1})\,|\, 1 \leq r <n , x_r(\pi) = x_{r+1}(\pi)\}.
\end{equation}
Then \begin{align*}
	\inv(\zeta(\pi),w) = \inv(\zeta(\pi),w\circ \sigma^\pi\circ(\sigma^\pi)^{-1} \ ) = \dinv(\pi,w \circ \sigma^\pi),
\end{align*}
and \begin{align*}
	\{(i,j) \in c(\zeta(\pi))\,|\, w_i \leq w_j\} = \{r \in [n-1]\,|\, x_r(\pi) = x_{r+1}(\pi) \hbox{ and } w_{\sigma^\pi_r} \leq w_{\sigma^\pi_{r+1}} \}.
\end{align*} Hence,
\begin{align}
	\overline{\chi}(\pi,q,t) &= \chi(\zeta(\pi),q,t) = \sum_{w \in \ZZ_{>0}^n} q^{\inv(\zeta(\pi),w)} t^{\# \{(i,j) \in c(\zeta(\pi))\,|\, w_i \leq w_j \}} x_{w}
	\nonumber
	\\
	&= \sum_{w \in \ZZ_{>0}^n} q^{\dinv(\pi,w\circ \sigma^\pi)} t^{\# \{r\,|\, x_r(\pi) = x_{r+1}(\pi)\hbox{ and } w_{\sigma^\pi_r} \leq w_{\sigma^\pi_{r+1}} \}} x_{w \circ \sigma^\pi}
	\\
	&= \sum_{w \in \ZZ_{>0}^n} q^{\dinv(\pi,w)} t^{\# \{r\,|\, x_r(\pi) = x_{r+1}(\pi)\hbox{ and } w_{r} \leq w_{{r+1}} \}} x_w.
	\nonumber
\end{align}

In particular we can extract the lowest and highest $t$-degree terms in $\overline{\chi}(\pi,q,t)$ as
\begin{equation}\label{eq:barchilowhigh}
	\overline{\chi}(\pi,q,0) = \sum_{w \in \WP_\pi(>)} q^{\dinv(\pi,w)}x_w \qquad \hbox{ and } \qquad \overline{\chi}(\pi,q,t)\big|_{t^{\# c(\zeta(\pi))}} = \sum_{w \in \WP_\pi(\leq)} q^{\dinv(\pi,w)}x_w.
\end{equation}

Let $\Par$ denote the set of integer partitions. For $\lambda \in \Par$ with $\ell(\lambda) = \ell$, define \begin{equation}
	\pi_\lambda^\Inv = N^{\lambda_\ell}\cdot(EN)^{\lambda_\ell}  N^{\lambda_{\ell-1}-\lambda_{\ell}}\cdot(EN)^{\lambda_{\ell-1}} N^{\lambda_{\ell-2}-\lambda_{\ell-1}} \cdot \ldots \cdot (EN)^{\lambda_2}  N^{\lambda_1-\lambda_2} \cdot E^{\lambda_1},
\end{equation} and  \begin{equation}
	\pi_\lambda^{\Quinv} = N^{\lambda_\ell}\cdot N^{\lambda_{\ell-1}-\lambda_{\ell}}(EN)^{\lambda_\ell}  \cdot N^{\lambda_{\ell-2}-\lambda_{\ell-1}}(EN)^{\lambda_{\ell-1}}  \cdot \ldots \cdot N^{\lambda_1-\lambda_2}(EN)^{\lambda_2} \cdot  E^{\lambda_1}.
\end{equation}  \autoref{fig:pilaminvandquinveg} illustrates the two paths when $\lambda = (3,2)$.

\begin{figure}[h]
	\centering
	\begin{minipage}{0.45 \textwidth}
		\begin{tikzpicture}
			\NEpath{0,0}{5}{5}{1,1,0,1,0,1,1,0,0,0}; 
		\end{tikzpicture}
		\subcaption{$\pi_{(3,2)}^\Inv$}
	\end{minipage}
	\hfill
	\begin{minipage}{0.45 \textwidth}
		\begin{tikzpicture}
			\NEpath{0,0}{5}{5}{1,1,1,0,1,0,1,0,0,0};
		\end{tikzpicture}
		\subcaption{$\pi_{(3,2)}^\Quinv$}
	\end{minipage}
	\caption{}
	\label{fig:pilaminvandquinveg}
\end{figure}

 For $\lambda \in \Par$, define the \textit{inversion reading order} to be the ordering of the boxes of $\lambda$ with reading left-to-right within each row and bottom to top among rows. The \textit{quinversion reading order} is the ordering of the boxes with reading right-to-left within each row and bottom to top among rows. \autoref{fig:invandquinvread} illustrates the two reading orders for $\lambda = (3,2)$.

\begin{figure}[h]
	\centering
	\begin{minipage}{0.45 \textwidth}
		\begin{align*}
			\ytableaushort{345,12}
		\end{align*}
		\subcaption{inversion reading order of $(3,2)$}
	\end{minipage}
	\hfill
	\begin{minipage}{0.45 \textwidth}
		\begin{align*}
			\ytableaushort{543,21}
		\end{align*}
		\subcaption{quinversion reading order of $(3,2)$}
	\end{minipage}
	\caption{}
	\label{fig:invandquinvread}
\end{figure}

A box \textit{inv-attacks} all the boxes in the same row to its right and all the boxes in the row above to its left. A box \textit{quinv-attacks} all the boxes in the same row to its left and all the boxes in the row above to its right. \autoref{fig:invandquinvattack} illustrates the inv- and quinv-attack relations.

\begin{figure}[h]
	\centering
	\begin{minipage}{0.45 \textwidth}
		\begin{align*}
			\ydiagram[*(gray)]{2,3+3}*[*(white) u]{0,2+1}*[*(white)]{7,6}
		\end{align*}	
		\subcaption{boxes inv-attacked by $u$}
	\end{minipage}
	\hfill
	\begin{minipage}{0.45 \textwidth}
		\begin{align*}
			\ydiagram[*(gray)]{3+4,2}*[*(white) u]{0,2+1}*[*(white)]{7,6}
		\end{align*}
		\subcaption{boxes quinv-attacked by $u$}		
	\end{minipage}
	\caption{}
	\label{fig:invandquinvattack}
\end{figure}

%The notion of inv-attack and quinv-attack can be defined for general shapes as above. Given a word in $\WP_\pi(>)$ (resp. $\WP_\pi(\leq)$) 
%
%\begin{align}
%	\ydiagram[*(gray)]{2,3+3}*[*(white) u]{0,2+1}*[*(white)]{7,6} \qquad & \qquad \ydiagram[*(gray)]{3+4,2}*[*(white) u]{2+1}*[*(white)]{7,6}
%	\\
%	\hbox{boxes inv-attacked by $u$} \qquad \qquad & \qquad \hbox{ boxes quinv-attacked by $u$}
%	\nonumber
%\end{align}

Let $\Inv(\lambda)$ (resp. $\Quinv(\lambda)$) be the set of pairs $(i,j)$ such that the $i$th box attacks the $j$th box in the inversion (quinversion) reading order of $\lambda$. 

If the box labelled $j$ in the inversion reading order (resp. quinversion reading order) is immediately above the box labelled $i$ we write $j = \up^\Inv(i)$ (resp. $j = \up^\Quinv(i)$).

 \begin{lemma}\label{lem:AreaInvQuinv}
	Let $\lambda \in \Par$. Then \begin{equation}
		\Area(\pi_\lambda^\Inv) = \Inv(\lambda) \qquad \hbox{ and }\qquad \Area(\pi_\lambda^\Quinv) = \Quinv(\lambda),
	\end{equation}
	and \begin{equation}
		c(\pi_\lambda^\Inv) = \{(i,\up^\Inv(i))\} \qquad \hbox{ and }\qquad c(\pi_\lambda^\Quinv) = \{(i,\up^\Quinv(i))\}.
	\end{equation}
%	\begin{align*}
%	&	\Area(\pi_\lambda^\Inv) = \Inv(\lambda),
%		\\
%	&	\Area(\pi_\lambda^\Quinv) = \Quinv(\lambda),
%	\end{align*} and \begin{align*}
%	&c(\pi_\lambda^\Inv) = \{(i,\up^\Inv(i))\},
%	\\
%	&c(\pi_\lambda^\Quinv) = \{(\up^\Quinv(i),i)\}.
%	\end{align*} 
\end{lemma}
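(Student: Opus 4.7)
The plan is to read off both statements by directly matching the block structure of each path to the row structure of $\lambda$. I would write $s_r = \lambda_r + \lambda_{r+1} + \cdots + \lambda_\ell$ (with $s_{\ell+1} = 0$), so that row $r$ of $\lambda$ carries (inv- or quinv-) reading labels $s_{r+1}+1, \ldots, s_r$, and then identify the $k$-th $N$ step of $\pi_\lambda^\Inv$ with the box of inv-label $k$: the initial $N^{\lambda_\ell}$ handles row $\ell$, and for $r \in [2,\ell]$ the portion $(EN)^{\lambda_r}\, N^{\lambda_{r-1}-\lambda_r}$ handles row $r-1$, with the $(EN)^{\lambda_r}$ piece covering the leftmost $\lambda_r$ columns of that row (those with a box below them) and $N^{\lambda_{r-1}-\lambda_r}$ the remaining rightmost columns. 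Let $h_a$ denote the height at which the $a$-th $E$ step of the path occurs. A direct trace yields $h_a = \lambda_r + a - 1$ whenever the $a$-th $E$ step lies in $(EN)^{\lambda_r}$ (that is, $a \in [s_{r+1}+1,s_r]$, $r \geq 2$), while $h_a = n$ throughout the final $E^{\lambda_1}$.

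For $\Area(\pi_\lambda^\Inv) = \Inv(\lambda)$, I would fix the box of inv-label $a$ in row $r$, column $c = a - s_{r+1}$, and enumerate its inv-attacks as (A) boxes in the same row to its right, with labels $b \in [a+1, s_r]$, and (B) boxes in row $r-1$ to its left, with labels $b = s_r+c''$ for $c'' \in [1,c-1]$. Their union is $[a+1, s_r + c - 1] = [a+1, a + \lambda_r - 1]$, matching $\{b : (a,b) \in \Area(\pi_\lambda^\Inv)\} = [a+1, h_a]$ term by term; the top row $r=1$ has only case (A) and gives $b \in [a+1, n]$, again matching $h_a = n$. The $\Quinv$ case is entirely parallel: in $\pi_\lambda^\Quinv$ the block $N^{\lambda_{r-1}-\lambda_r}$ precedes $(EN)^{\lambda_r}$ in the row-$(r-1)$ piece, and under the right-to-left quinversion reading these extra $N$ steps again correspond to the rightmost $\lambda_{r-1}-\lambda_r$ columns (those receiving the smallest labels in that row). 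The same trace now gives $h_a = \lambda_{r-1} + a - 1$ on $(EN)^{\lambda_r}$, and the quinv-attacks of the row-$r$ box with quinv-label $a$ (same row to its left, row above to its right) fill exactly $[a+1, \lambda_{r-1} + a - 1]$.

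For the corners I would use that a corner is a cell $(a, h_a+1)$ with $h_{a+1} \geq h_a+1$. In $\pi_\lambda^\Inv$ this holds precisely for $a \in [s_{r+1}+1, s_r]$ with $r \geq 2$, producing the corner $(a, a+\lambda_r)$; since the box directly above the row-$r$, column-$c$ box (with $c = a - s_{r+1}$) has inv-label $s_r + c = a + \lambda_r$, this corner is $(a, \up^\Inv(a))$. The quinv computation is identical after the change $\lambda_r \mapsto \lambda_{r-1}$, since the quinv-label of that box-above is $s_r + \lambda_{r-1} - c + 1 = \lambda_{r-1} + a$. The only genuinely delicate point is verifying $h_{a+1} > h_a$ at the block boundaries $a = s_r$, which reduces exactly to the partition inequality $\lambda_{r-1} \geq \lambda_r$; once the block-to-row dictionary is in place both set equalities fall out by unwinding definitions.
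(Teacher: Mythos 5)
Your proof is correct and follows essentially the same route as the paper's (much terser) argument: compute the height $h_a$ of each $E$ step from the explicit word for the path to get $\Area(\pi)=\{(a,b): a<b\le h_a\}$, match this interval against the inv-/quinv-attack sets row by row, and identify corners with occurrences of $EN$. The only blemish is a notational slip in the quinv corner computation, where $c$ must be read as the column of the box with quinv-label $a$, namely $c=s_r-a+1$ rather than $a-s_{r+1}$; with that reading the formula $s_r+\lambda_{r-1}-c+1=\lambda_{r-1}+a$ is as claimed.
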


\begin{proof}
	\item[(1)]
%	The $N$ steps before the $i$th $E$ step and after the $i$th $N$ step gives the boxes in the $i$th column of $\Area(\pi)$ for a Dyck path $\pi$. 
%	
	Let $\pi$ be a Dyck path of semilength $n$.
	Let the height of a $E$ step be the number of $N$ steps before it. If the height of the $i$th $E$ step is $h_i$ then $$ \Area(\pi) = \{(i,j)| 1 \leq i \leq n, i<j\leq h_i\}.$$ Then the explicit expressions for the paths gives the first statement.

	\item[(2)] Corners of a Dyck path $\pi$ correspond to every occurrence of $EN$ in $\pi$, such a corner has $x$-co-ordinate equal to the number of $E$ steps before it $+1$ (since our co-ordinates start from $1$) and $y$-co-ordinate equal to the number of $N$ steps before it $+1$. Then the explicit expressions for $\pi_\lambda^\Inv$ and $\pi_\lambda^\Quinv$ gives the second statement.
\end{proof}

In particular, this implies \begin{equation}\label{eq:cornercard}
	\# c(\pi_\lambda^\Inv) = \# c(\pi_\lambda^\Quinv) = \sum_{i = 1}^{\lambda_1} (\lambda'_i-1) = |\lambda|- \lambda_1.
\end{equation}

The $q$-Whittaker functions $W_\lambda(q)$ and the modified Hall-Littlewood functions $\widetilde{H}_{\lambda'}(q)$ are the highest and lowest $t$-degree terms of the modified Macdonald functions $H_\lambda(q,t)$, where we have \cite[(3.1)]{BergeronqWhittaker} \begin{equation}\label{eq:modMac}
H_\lambda(q,t) = W_\lambda(q)\cdot t^{0}+\ldots+\widetilde{H}_{\lambda'}(q)\cdot t^{n(\lambda)}.
\end{equation} Here $\lambda'$ denotes the conjugate partition. For definitions of modified Macdonald functions and $\arm, \leg, \coleg$ used below we refer the reader to \cite{HHL-I}. Note that here we use $H_\lambda(q,t)$ without the $\widetilde{\cdot}$, which is obtained from the one used by \cite{HHL-I} and \cite{AMM-I} by $$ H_\lambda(q,t) = t^{n(\lambda)} \widetilde{H}_\lambda(q,t^{-1}).$$ There seems to be some difference in conventions in the literature, some authors write $\widetilde{H}_\lambda(q)$ in place of our $\widetilde{H}_{\lambda'}(q)$ above. We use the definition that $ \widetilde{H}_\lambda(q) = \widetilde{H}_\lambda(q,0)$, together with the identity $$\widetilde{H}_{\lambda}(q,t) = \widetilde{H}_{\lambda'}(t,q)$$ explains \eqref{eq:modMac}.

%Thus the formulas given by \cite[Theorem 2.2]{HHL-I} and \cite[Theorem 2.6]{AMM-I} translates to \begin{align*}
%H_\lambda(X;q,t) = \sum_{T:\lambda \to \ZZ_{>0}} q^{\inv(T)}t^{n(\lambda)-\maj(T)}x^T, 
%\end{align*} and \begin{align*}
%H_\lambda(X;q,t) = \sum_{T:\lambda \to \ZZ_{>0}} q^{\quinv(T)}t^{n(\lambda)-\maj(T)}x^T,
%\end{align*} where for a filling $T: \lambda \to \ZZ_{>0}$,  $$\inv(T) = \Inv(T) - \sum_{u \in \Des(T)} \arm(u)$$ and  $$\quinv(T) = \Quinv(T) - \sum_{u \in \Des(T)} \arm(\up(u)).$$ Here $\Des(T)$ is the set of 

Extracting the lowest and highest $t$ degree terms in $H_\lambda(q,t)$ from \cite{HHL-I} and \cite{AMM-I} formulas corresponds to taking fillings where $\maj$ is maximum or descent set is full and $\maj$ minimum or descent set empty respectively. Then \cite[Theorem 2.2]{HHL-I} says \begin{equation}\label{eq:HHL}
	W_\lambda(q) = q^{-\alpha_\Inv(\lambda)}  \chi(\pi_\lambda^\Inv,q,0) \qquad \hbox{ and } \qquad  \widetilde{H}_{\lambda'}(q) = \chi(\pi_\lambda^\Inv,q,t)\big|_{t^{\# c(\pi_\lambda^\Inv)}} ,
\end{equation} where \begin{equation}\label{eq:alphaInv}
\alpha_\Inv(\lambda)= \sum\limits_{\substack{u \in \lambda \\ \coleg(u) \neq 0}}\arm(u).
\end{equation} \cite[Theorem 2.6]{AMM-I} says \begin{equation}\label{eq:AMM}
W_\lambda(q) = q^{-\alpha_\Quinv(\lambda)} \chi(\pi_\lambda^\Quinv,q,0) \qquad \hbox{ and } \qquad  \widetilde{H}_{\lambda'}(q) = \chi(\pi_\lambda^\Quinv,q,t)\big|_{t^{\# c(\pi_\lambda^\Quinv)}},
\end{equation} where \begin{equation}\label{eq:alphaQuinv}
\alpha_\Quinv(\lambda) = \sum\limits_{\substack{u \in \lambda \\ \leg(u) \neq 0}} \arm(u) .
\end{equation}

In particular, both the $\Inv$ and $\Quinv$ statistics on fillings of partitions can be realized as $\inv$ statistic on words on Dyck paths.

The main purpose of this article is to prove the equality between the right hand sides of \eqref{eq:HHL} and \eqref{eq:AMM}, this is achieved in \autoref{th:mainth}.

For a partition $\lambda$, the symmetric functions $\chi(\pi_\lambda^\Inv,q,t)$ and $\chi(\pi^\Quinv_\lambda,q,t)$ are two families of symmetric functions that has as its lowest $t$-degree term and highest $t$-degree terms the $q$-Whittaker functions and the modified Hall-Littlewood functions respectively: \begin{align*}
	\chi(\pi^\Inv_\lambda,q,t) &= q^{\alpha_\Inv(\lambda)}W_\lambda(q)\cdot t^0+\ldots+\widetilde{H}_{\lambda'}(q)\cdot t^{\# c(\pi_\lambda^\Inv)}, 
	\\
	\chi(\pi^\Quinv_\lambda,q,t) &= q^{\alpha_\Quinv(\lambda)}W_\lambda(q)\cdot t^0+\ldots+\widetilde{H}_{\lambda'}(q)\cdot t^{\# c(\pi_\lambda^\Quinv)}.
\end{align*} This is analogous to \eqref{eq:modMac} for the modified Macdonald functions.

For use in \autoref{th:mainth} we calculate the difference between $\alpha_\Quinv$ and $\alpha_\Inv$ below.

\begin{lemma}\label{lem:alpQ-alpI}
	Let $\lambda \in \Par$ and $m'_i = m_i(\lambda')$ be the multiplicity of $i$ in the conjugate partition $\lambda'$. Then  $$\alpha_\Quinv(\lambda) - \alpha_\Inv(\lambda) = \sum_{i>j} m'_im'_j.$$
\end{lemma}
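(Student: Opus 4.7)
The plan is to simplify $\alpha_\Quinv(\lambda) - \alpha_\Inv(\lambda)$ by cancelling the arm contributions that occur in both sums. A cell $u \in \lambda$ contributes to $\alpha_\Quinv$ but not $\alpha_\Inv$ exactly when $\leg(u) \neq 0$ and $\coleg(u) = 0$, i.e., $u$ is the bottom cell of a column of height at least two; symmetrically, it contributes to $\alpha_\Inv$ but not $\alpha_\Quinv$ precisely when $u$ is the top cell of such a column. Cells in height-one columns and cells with both $\leg(u) \neq 0$ and $\coleg(u) \neq 0$ appear with the same sign in both sums and cancel.

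Next I would compute the two boundary contributions column by column. For a column $i$ of height $\lambda'_i \geq 2$, the bottom cell $(i,1)$ sits in row $1$ of length $\lambda_1$ and so has arm $\lambda_1 - i$, while the top cell $(i,\lambda'_i)$ sits in row $\lambda'_i$ of length $\lambda_{\lambda'_i}$ and so has arm $\lambda_{\lambda'_i} - i$. Thus the net contribution of each such column is $\lambda_1 - \lambda_{\lambda'_i}$. Since a column of height one contributes $\lambda_1 - \lambda_1 = 0$, the sum extends harmlessly over all $i$, giving
\[
\alpha_\Quinv(\lambda) - \alpha_\Inv(\lambda) = \sum_{i=1}^{\lambda_1}\bigl(\lambda_1 - \lambda_{\lambda'_i}\bigr).
\]

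The last step is a re-indexing by column height. Since exactly $m'_k$ columns have height $k$, and since $\lambda_1 - \lambda_k$ equals the number of columns strictly shorter than $k$, namely $\sum_{j<k} m'_j$, the sum collapses to $\sum_{k\geq 1} m'_k \sum_{j<k} m'_j = \sum_{i>j} m'_i m'_j$, as claimed. The whole argument is elementary; the only potential pitfall is matching the HHL conventions so that $\coleg(u) = 0$ identifies the bottom of a column (row one) and $\leg(u) = 0$ identifies the top, which is immediate from the definitions.
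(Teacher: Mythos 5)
Your proof is correct and takes essentially the same route as the paper: both arguments cancel the cells counted in each sum and reduce the difference to $\sum_{\coleg(u)=0}\arm(u)-\sum_{\leg(u)=0}\arm(u)$, i.e.\ first-row arms minus top-of-column arms. The only divergence is in the final bookkeeping — the paper evaluates the two sums in closed form as $\binom{\lambda_1}{2}-\sum_{i\geq 1}\binom{m'_i}{2}$ and finishes with a binomial identity, while you pair the bottom and top cell of each column to get $\sum_{i}(\lambda_1-\lambda_{\lambda'_i})$ and reindex by column height; both computations are valid.
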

\begin{proof}
	Since $$\alpha_\Quinv(\lambda) = \sum_{u \in \lambda} \arm(u) - \sum_{\substack{u \in \lambda \\ \leg(u) = 0}} \arm(u),$$ and $$ \alpha_\Inv(\lambda) = \sum_{u \in \lambda} \arm(u) - \sum_{\substack{u \in \lambda \\ \coleg(u) = 0}} \arm(u),$$ so \begin{align*}
		\alpha_\Quinv(\lambda) - \alpha_\Inv(\lambda) &= \sum_{\substack{u \in \lambda \\ \coleg(u) = 0}} \arm(u) - \sum_{\substack{u \in \lambda \\ \leg(u) = 0}} \arm(u) = \binom{\lambda_1}{2} - \sum_{i \geq 1} \binom{m'_i}{2} 
		\\
		&= \binom{\sum_{i \geq 1} m'_i}{2} - \sum_{i \geq 1} \binom{m'_i}{2}  
		\\
		&= \dfrac{1}{2} \cdot \bigg((\sum_{i \geq 1}m'_i)(\sum_{i \geq 1}m'_i - 1) - \sum_{i \geq 1}m'_i(m'_i-1)\bigg) 
		\\
		&= \sum_{i>j} m'_im'_j.
	\end{align*}
\end{proof}

The next lemma shows that we can view the right hand side from \autoref{lem:alpQ-alpI} as the length of the smallest permutation that makes $\lambda'$ into an antipartition.

\begin{lemma}\label{lem:revlength}
	Let $\lambda = (n^{m_n},\ldots,1^{m_1}) \in \Par$. The smallest permutation that takes $\lambda$ to $\rev(\lambda)$ has length =  $\sum_{i>j} m_im_j$. 
\end{lemma}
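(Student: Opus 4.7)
The plan is to realise the length-minimising permutation as a block-order-preserving bijection on positions and then count its inversions directly. Writing $\lambda$ as a word of length $\ell = \sum_i m_i$, let $B_i \subseteq [\ell]$ be the set of positions where $\lambda$ takes value $i$. Each $B_i$ is a contiguous interval of size $m_i$, appearing from left to right in the order $B_n, B_{n-1}, \ldots, B_1$. Analogously let $T_i \subseteq [\ell]$ be the set of positions holding value $i$ in $\rev(\lambda) = (1^{m_1}, \ldots, n^{m_n})$; the $T_i$ are contiguous intervals of the same sizes but now appear in the reversed order $T_1, T_2, \ldots, T_n$. A permutation $\sigma \in S_\ell$ sends the word $\lambda$ to $\rev(\lambda)$ if and only if $\sigma(B_i) = T_i$ for every $i$.

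Next I would argue that among such $\sigma$ the shortest is the one which respects the natural order inside every block. The inversions of $\sigma$ decompose into in-block pairs (both indices in some $B_i$) and cross-block pairs. For a cross-block pair $a \in B_i$, $b \in B_j$ with $i \ne j$, the inversion status is determined by the constraints $\sigma(B_i) = T_i$ and $\sigma(B_j) = T_j$ alone, because $T_i$ and $T_j$ are disjoint intervals and so $T_i$ lies entirely before or entirely after $T_j$; the particular element of $T_i$ chosen for $\sigma(a)$ is irrelevant. In-block inversions, by contrast, are killed by requiring $\sigma$ to restrict to the order-preserving bijection $B_i \to T_i$. Hence the unique length-minimiser $\sigma_{\min}$ is this block-order-preserving bijection, equivalently the minimum-length representative in the double coset of $S_{m_1} \times \cdots \times S_{m_n}$ implementing the required reversal of blocks.

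Finally I would enumerate the inversions of $\sigma_{\min}$, which are all cross-block. A pair $a < b$ with $a \in B_i$ and $b \in B_j$, $i \ne j$, satisfies $a < b$ iff $B_i$ precedes $B_j$ in $\lambda$, iff $i > j$. For such a pair, $\sigma_{\min}(a) \in T_i$ and $\sigma_{\min}(b) \in T_j$, while $T_i$ follows $T_j$ in $\rev(\lambda)$ exactly when $i > j$, forcing $\sigma_{\min}(a) > \sigma_{\min}(b)$, so every cross-block pair with $a < b$ really is an inversion. The count is therefore
\[
\sum_{i > j} |B_i|\,|B_j| \;=\; \sum_{i > j} m_i m_j,
\]
which is the claimed length. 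The only real subtlety is the standard Young-subgroup fact that the minimum-length coset representative has no inversions inside any block, and this drops out of the inversion decomposition above; the rest is a direct enumeration.
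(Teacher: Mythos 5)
Your proof is correct, and it takes a genuinely different (and in one respect more complete) route than the paper. The paper argues by explicit construction: it bubble-sorts the word $\lambda$ into $\rev(\lambda)$, moving each $n$ past all the smaller entries, then each $n-1$, and so on, and counts the adjacent transpositions used, getting $\sum_{i>j}m_im_j$; the minimality of this count is left essentially implicit. You instead use the characterization $\ell(\sigma)=\#\{(a,b): a<b,\ \sigma(a)>\sigma(b)\}$ and decompose the inversions of any $\sigma$ satisfying $\sigma(B_i)=T_i$ into cross-block and in-block pairs. The key observation --- that the cross-block inversions are \emph{forced} because $T_i$ and $T_j$ are disjoint intervals whose relative order is reversed from that of $B_i$ and $B_j$ exactly when $i>j$ --- gives the lower bound $\ell(\sigma)\geq\sum_{i>j}m_im_j$ for every admissible $\sigma$, while the block-order-preserving choice kills all in-block inversions and attains it. So your argument proves both the upper and the lower bound cleanly, whereas the paper's construction only exhibits a permutation of the stated length; what the paper's version buys in exchange is that it is shorter and makes visible the reduced word (product of simple transpositions) that is implicitly used elsewhere. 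Both are valid; yours is the more airtight justification of the word ``smallest'' in the statement.
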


\begin{proof}
	Recall that length of a permutation is the smallest number of simple transpositions required to write it. Starting from $\lambda$, first we move the last $n$ all the way across $n-1,\ldots,1$s. This requires $m_{n-1}+\ldots+m_1$ many simple transpositions. Then we move the second last $n$ all the way across $n-1,\ldots,1$s, this also requires $m_{n-1}+\ldots+m_1$ many simple transpositions. So to move all the $n$ across all the $n-1,\ldots,1$s we require $m_n(m_{n-1}+\ldots+m_1)$ many simple transpositions. Then we move all the $n-1$s across all the $n-2,\ldots,1$s and so on. Finally we move all the $2$s across all the $1$s. So the whole procedure involves \begin{align*}
		m_n(m_{n-1}+\ldots+m_1) + m_{n-1}(m_{n-2}+\ldots+m_1)+\ldots+m_2(m_1) = \sum_{i>j} m_im_j
	\end{align*} many simple transpositions.
\end{proof}

Now we will analyze the relation between $\pi^\Inv_\lambda$ and $\pi^\Quinv_\lambda$ using the maps $\zeta, \zeta^{-1}$ and $\rev$.

\begin{proposition}\label{prop:revzetapilam}
	For $\lambda \in \Par$ with $\lambda' = (\lambda'_1,\ldots,\lambda'_k)$, $k = \lambda_1$, let \begin{equation}
		\pi_{\lambda} = N^{\lambda'_k}E^{\lambda'_k} \cdot \ldots \cdot N^{\lambda'_1}E^{\lambda'_1}.
	\end{equation}. Then \begin{equation}
	\rev \circ \zeta(\pi_\lambda) = \pi^\Inv_\lambda \qquad \hbox{ and } \qquad \rev \circ \zeta \circ \rev (\pi_\lambda) = \pi^\Quinv_\lambda.
	\end{equation} 
%	$$ \rev \circ \zeta(\pi_\lambda) = \pi^\Inv_\lambda, $$ and $$ \zeta \circ \rev (\pi_\lambda) = \pi^\Quinv_\lambda.$$ 
%	%	Then $$ \zeta^{-1}(\pi_\lambda^\Inv) = N^{\ell_1}+E^{\ell_1} + \ldots + N^{\ell_k}+E^{\ell_k}.$$
\end{proposition}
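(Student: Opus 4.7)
The plan is to verify both equalities by computing the area set on each side and checking they agree; since the area determines a Dyck path uniquely, this suffices. By \autoref{lem:AreaInvQuinv} we have $\Area(\pi^\Inv_\lambda) = \Inv(\lambda)$ and $\Area(\pi^\Quinv_\lambda) = \Quinv(\lambda)$, and by \eqref{eq:Arearev} applying $\rev$ to a path sends its area to $\{(n+1-j,n+1-i): (i,j) \in \Area\}$. Thus it suffices to compute $\Area(\zeta(\pi_\lambda))$ and $\Area(\zeta(\rev(\pi_\lambda)))$ explicitly and compare them with the appropriate reversed translates of $\Inv(\lambda)$ and $\Quinv(\lambda)$.

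The first step is to determine the reading labels of $\pi_\lambda$. Writing $\pi_\lambda$ as a concatenation of $k$ blocks $N^{\lambda'_{k-i+1}}E^{\lambda'_{k-i+1}}$ for $i=1,\dots,k$, each beginning and ending on the diagonal, the cells right of the $N$-steps in the $i$-th block lie on diagonals $0,1,\dots,\lambda'_{k-i+1}-1$. Setting $L_d := \lambda_1+\cdots+\lambda_d$, since $\lambda'$ is weakly decreasing the block sizes are weakly \emph{increasing}, so diagonal $d$ contains exactly $\lambda_{d+1}$ cells, all lying in the rightmost $\lambda_{d+1}$ blocks. The reading labels then assign $L_d+1,\dots,L_{d+1}$ to these cells from left to right. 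A direct analysis of the dinv-attack relation \eqref{eq:zetadef} then expresses $\Area(\zeta(\pi_\lambda))$ as the union over $d$ of the same-diagonal pairs $\{(i,j): L_d<i<j\le L_{d+1}\}$ and the cross-diagonal pairs $\{(i,j): L_d<i\le L_{d+1}<j\le L_{d+2},\ j<i+\lambda_{d+2}\}$, the latter coming from a short computation comparing block indices of labels $i$ and $j$. Applying the reversal involution to $\Inv(\lambda)$, and using the identity $n-\widetilde{L}_r = L_r$ for $\widetilde{L}_r := \sum_{s>r}\lambda_s$ to translate between the $\widetilde{L}$-indexing natural for the inv-attacks and the $L$-indexing above, yields precisely the same description (with $r=d+2$ converting the adjacent-row inv-attack condition $j'<i'+\lambda_r$ into $j<i+\lambda_{d+2}$), establishing the first equality.

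For the second equality, observe that $\rev(\pi_\lambda) = N^{\lambda'_1}E^{\lambda'_1}\cdots N^{\lambda'_k}E^{\lambda'_k}$ has blocks of weakly \emph{decreasing} sizes. The reading labels then simplify considerably: the cell in block $i$ and diagonal $d$ receives label $L_d+i$ directly, for $1\le i\le \lambda_{d+1}$. The analogous analysis of dinv-attacks gives $\Area(\zeta(\rev(\pi_\lambda)))$ with the cross-diagonal condition now reading $j<i+\lambda_{d+1}$, which after the reversal matches the quinv-attack condition $j'<i'+\lambda_{r-1}$ (taking $r=d+2$). The shift from $\lambda_{d+2}$ for inv to $\lambda_{d+1}$ for quinv reflects the asymmetry between ``row above to the left'' in the inv-attack and ``row above to the right'' in the quinv-attack.

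The main technical obstacle is the careful bookkeeping of the correspondence between reading labels and block positions under the opposite monotonicity patterns of the block sizes in $\pi_\lambda$ versus $\rev(\pi_\lambda)$, and converting the Inv/Quinv-attack conditions, which are naturally stated in terms of rows and columns of $\lambda$ (and the $\widetilde{L}$-indexing), into the diagonal-indexed ($L$-indexing) form that emerges from the $\zeta$-side computation after applying the reversal involution.
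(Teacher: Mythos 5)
Your proposal is correct and follows essentially the same route as the paper: both arguments reduce the claim to computing $\Area(\zeta(\pi_\lambda))$ and $\Area(\zeta(\rev(\pi_\lambda)))$ via the dinv-attack relation and matching them, through \eqref{eq:zetadef}, \eqref{eq:Arearev} and \autoref{lem:AreaInvQuinv}, with the reversed copies of $\Inv(\lambda)$ and $\Quinv(\lambda)$. The only difference is presentational: the paper identifies dinv-attacks with inv-/quinv-attacks by a pictorial rearrangement (reversing the columns with ``gravity upwards'' and relabeling $i\mapsto|\lambda|+1-i$), whereas you carry out the same identification by explicit index bookkeeping with $L_d$ and $\widetilde{L}_r$, and your cross-diagonal conditions $j<i+\lambda_{d+2}$ and $j<i+\lambda_{d+1}$ check out.
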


\begin{proof}

	\item[(1)] $$ \boxed{|\lambda|+1-j} \rightarrow \boxed{|\lambda|+1-i} \hbox{ in } \pi_\lambda \qquad \hbox{ if and only if }\qquad (i,j) \in \Inv(\lambda).$$ 
	To see this, place the columns of $\pi_\lambda$ with reading labels in reverse order, with gravity working upwards so that we get a partition shape, and reverse the reading-label inside each column. Then the box labelled $|\lambda|+1-j$ inv-attacks the box labelled $|\lambda|+1-i$ in this new diagram if and only if $\boxed{|\lambda|+1-i}\rightarrow \boxed{|\lambda|+1-j}$ in $\pi_\lambda$. Changing each label $i$ to $|\lambda|+1-i$ produces $\lambda$ with inversion reading order. For example if $\lambda = (3,2)$, $\pi_{(3,2)}$ is pictured in \autoref{fig:pi_32andzeta}. \begin{figure}[h]
		\centering
		\begin{minipage}{0.45 \textwidth}
			\begin{tikzpicture}
				\NEpath{0,0}{5}{5}{1,0,1,1,0,0,1,1,0,0}; 
				\node at (0.5,0.5) {1};
				\node at (1.5,1.5) {2};
				\node at (1.5,2.5) {4};
				\node at (3.5,3.5) {3};
				\node at (3.5,4.5) {5};
			\end{tikzpicture}
			\subcaption{$\pi_{(3,2)}$}
		\end{minipage}
		\hfill
		\begin{minipage}{0.45 \textwidth}
			\begin{tikzpicture}
				\NEpath{0,0}{5}{5}{1,1,1,0,0,1,0,1,0,0}
			\end{tikzpicture}
			\subcaption{$\zeta(\pi_{(3,2)}) = \rev(\pi^\Inv_{(3,2)})$}
		\end{minipage}
		%	\hfill
		%	\begin{minipage}{0.30 \textwidth}
			%				\begin{tikzpicture}
				%			\NEpath{0,0}{5}{5}{1,1,0,1,0,1,1,0,0,0}; 
				%		\end{tikzpicture}
			%		\subcaption{$\rev\circ \zeta(\pi_{(3,2)} = \pi_{(3,2)}^\Inv$}
			%	\end{minipage}
		\caption{}
		\label{fig:pi_32andzeta}
	\end{figure} In this case placing the columns of $\pi_{(3,2)}$ in reverse order with gravity upwards produces $\ytableaushort{541,32}$. Then reversing label within each column gives $\ytableaushort{321,54}$, finally replacing each $i$ with $6-i$ gives $\ytableaushort{345,12}$, which is the inversion reading order of $(3,2)$.  
	
	Then \eqref{eq:zetadef} gives $$ \Area(\zeta(\pi_\lambda)) = \{(|\lambda|+1-j,|\lambda|+1-i)\,|\,(i,j) \in \Inv(\lambda)\}.$$ Applying \eqref{eq:Arearev} and \autoref{lem:AreaInvQuinv} gives $$ \Area(\rev \circ \zeta(\pi_\lambda)) = \Area(\pi^\Inv_\lambda),$$ which is the first statement. 

	\item[(2)] 
	$$ \boxed{|\lambda|+1-i} \rightarrow \boxed{|\lambda|+1-j} \hbox{ in } \rev(\pi_\lambda) \qquad\hbox{ if and only if }\qquad (i,j) \in \Quinv(\lambda).$$
	To see this, write the columns of $\rev(\pi_{\lambda})$ with reading labels left-to-right, with gravity upwards, so that we get a shape of a partition, and reverse the labels in each column from bottom to top. Then the box labelled $|\lambda|+1-j$ quinv-attacks the box labelled $|\lambda|+1-i$ in this new diagram if and only if $\boxed{|\lambda|+1-i}\rightarrow \boxed{|\lambda|+1-j}$ in $\pi_\lambda$. Changing each label $i$ to $|\lambda|+1-i$ produces $\lambda$ with quinversion reading order. For example if $\lambda = (3,2)$, $\pi_{(3,2)}$ is pictured in \autoref{fig:pi_32andzeta}. \begin{figure}[h]
		\centering
		\begin{minipage}{0.45 \textwidth}
			\begin{tikzpicture}
				\NEpath{0,0}{5}{5}{1,1,0,0,1,1,0,0,1,0};
				\node at (0.5,0.5) {1};
				\node at (0.5,1.5) {4};
				\node at (2.5,2.5) {2};
				\node at (2.5,3.5) {5};
				\node at (4.5,4.5) {3};
			\end{tikzpicture}
			\subcaption{$\rev(\pi_{(3,2)})$}
		\end{minipage}
		\hfill
		\begin{minipage}{0.45 \textwidth}
			\begin{tikzpicture}
				\NEpath{0,0}{5}{5}{1,1,1,0,1,0,1,0,0,0};
			\end{tikzpicture}
			\subcaption{$\zeta(\rev(\pi_{(3,2)})) = \rev(\pi_{(3,2)}^\Quinv)$}
		\end{minipage}
		\caption{}
		\label{fig:revpi32andzeta}
	\end{figure}
	 In this case placing the columns of $\pi_{(3,2)}$ in reverse order with gravity upwards produces $\ytableaushort{453,12}$. Then reversing label within each column gives $\ytableaushort{123,45}$, finally replacing each $i$ with $6-i$ gives $\ytableaushort{543,21}$, which is the quinversion reading order of $(3,2)$.  
		
	Then \eqref{eq:zetadef} gives $$ \Area(\zeta\circ\rev(\pi_\lambda)) = \{(|\lambda|+1-j,|\lambda|+1-i)\,|\,(i,j) \in \Quinv(\lambda)\}.$$ Applying \eqref{eq:Arearev} and \autoref{lem:AreaInvQuinv} gives $$ \Area(\rev \circ \zeta \circ \rev(\pi_\lambda)) = \Area(\pi^\Quinv_\lambda),$$ which is the second statement. 
	
\end{proof}

%For example, if $\lambda = (3,2) = \ydiagram{3,2}$ then $\pi_\lambda = NEN^2E^2N^2E^2$. 

\begin{corollary}\label{res:pathInvQuinvrel}
	Let $\lambda \in \Par$. Then \begin{equation}\label{eq:pathInvQuinvrel}
		\rev\circ \zeta \circ \rev \circ \zeta^{-1} \circ \rev (\pi_\lambda^\Inv) = \pi_\lambda^\Quinv .
	\end{equation}
\end{corollary}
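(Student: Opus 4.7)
The plan is to derive this as a direct algebraic consequence of the two identities established in \autoref{prop:revzetapilam}, using only the fact that $\rev$ is an involution on Dyck paths and that $\zeta$ is a bijection (so $\zeta^{-1}$ makes sense).

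First I would invert the first identity from \autoref{prop:revzetapilam}. Starting from $\rev \circ \zeta(\pi_\lambda) = \pi_\lambda^\Inv$, applying $\rev$ to both sides gives $\zeta(\pi_\lambda) = \rev(\pi_\lambda^\Inv)$, since reading a Dyck path right-to-left twice (swapping $N \leftrightarrow E$ each time) returns the original. Then applying $\zeta^{-1}$ yields
\begin{equation*}
	\pi_\lambda = \zeta^{-1} \circ \rev(\pi_\lambda^\Inv).
\end{equation*}

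Next I would substitute this expression for $\pi_\lambda$ into the second identity of \autoref{prop:revzetapilam}, namely $\pi_\lambda^\Quinv = \rev \circ \zeta \circ \rev(\pi_\lambda)$, to obtain
\begin{equation*}
	\pi_\lambda^\Quinv = \rev \circ \zeta \circ \rev \circ \zeta^{-1} \circ \rev(\pi_\lambda^\Inv),
\end{equation*}
which is exactly \eqref{eq:pathInvQuinvrel}.

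There is no genuine obstacle; the entire content of the corollary has been absorbed into \autoref{prop:revzetapilam}, and the corollary is just the composition of the two identities with $\pi_\lambda$ eliminated. The only background facts invoked are the involutivity of $\rev$ (immediate from its definition via reading right-to-left and swapping $E \leftrightarrow N$) and the bijectivity of $\zeta$, the latter being standard in the theory \cite{Haglund-Xin-shuffle}.
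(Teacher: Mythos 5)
Your proposal is correct and is precisely the intended derivation: the paper states this corollary immediately after \autoref{prop:revzetapilam} with no separate proof, and the implicit argument is exactly your elimination of $\pi_\lambda$ using the involutivity of $\rev$ and the invertibility of $\zeta$. Nothing is missing.
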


%Note that $\pi_\lambda$ is balanced (equal to its bounce path).

%
%\begin{conjecture}
%	Let $\lambda \in \Par$. Then $$ \bounce(\rev \circ \zeta^{-1} (\pi_\lambda^\Inv)) = \rev \circ \zeta^{-1} (\pi_\lambda^\Inv) .$$
%\end{conjecture}

Now we will analyze the change in $\chi$ and $\overline{\chi}$ under the reversal map.

The following lemma generalizes \cite[Proposition 3.3]{Carlsson-Mellit-Shuffle}.

\begin{lemma}\label{lem:chirev}
	Let $\pi$ be a Dyck path. Then $$ \chi(\pi,q,t) = \chi(\rev(\pi),q,t).$$
\end{lemma}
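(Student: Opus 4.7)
The plan is to use the symmetry of $\chi$ established in \cite[Proposition 3.7]{Carlsson-Mellit-Shuffle} to obtain a dual expression for $\chi(\pi, q, t)$ and then match it with a direct computation of $\chi(\rev(\pi), q, t)$.

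First I derive the dual formula. Since $\chi(\pi, q, t)$ is a symmetric function, its evaluation at finitely many variables $x_1, \ldots, x_N$ (setting $x_k = 0$ for $k > N$) is invariant under the reversal $x_i \leftrightarrow x_{N+1-i}$. Performing this substitution inside the defining sum and re-indexing via $w \mapsto w'$ with $w'_i = N+1-w_i$, the inequality $w_i > w_j$ flips to $w'_i < w'_j$ and the corner condition $w_i \leq w_j$ flips to $w'_i \geq w'_j$. Letting $N \to \infty$ yields the identity
\begin{equation*}
 \chi(\pi, q, t) \;=\; \sum_{w \in \ZZ_{>0}^n} q^{\#\{(i,j) \in \Area(\pi)\,:\,w_i < w_j\}}\, t^{\#\{(i,j) \in c(\pi)\,:\, w_i \geq w_j\}}\, x_w.
\end{equation*}

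Next I compute $\chi(\rev(\pi), q, t)$ directly. Substituting the reversed word $\bar w$ with $\bar w_i = w_{n+1-i}$ preserves the monomial $x_w$. Using \eqref{eq:Arearev} together with the analogous bijection $c(\rev(\pi)) = \{(n+1-j, n+1-i)\,:\,(i, j) \in c(\pi)\}$ (which follows from the observation that corners correspond to $EN$ occurrences in the path, and reversal sends each $EN$ at positions $(k, k+1)$ in $\pi$ to an $EN$ at positions $(2n-k, 2n-k+1)$ in $\rev(\pi)$), the statistic $\inv(\rev(\pi), \bar w)$ becomes $\#\{(i,j) \in \Area(\pi)\,:\, w_j > w_i\}$ and the corresponding corner statistic becomes $\#\{(i,j) \in c(\pi)\,:\, w_j \leq w_i\}$. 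This matches the dual formula above term by term, so $\chi(\pi, q, t) = \chi(\rev(\pi), q, t)$.

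The substantive step is the derivation of the dual formula, which genuinely uses the symmetry of $\chi$ rather than any direct term-by-term bijection on words; the corner bijection and the various statistic rewrites are routine and parallel \eqref{eq:Arearev}.
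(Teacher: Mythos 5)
Your proof is correct and is essentially the paper's argument: both rest on the transformation of $\Area(\pi)$ and $c(\pi)$ under $\rev$, a complementation of the word values, and the symmetry of $\chi$ from \cite[Proposition 3.7]{Carlsson-Mellit-Shuffle}. The only difference is organizational --- the paper performs the position reversal and value complementation in a single substitution $\widetilde{w}_i = m+1-w_{n+1-i}$ and invokes symmetry at the end to identify the coefficients of $x^\alpha$ and $x^{w_0(\alpha)}$, whereas you invoke symmetry first to obtain the dual formula and then carry out the position reversal as a direct bijection.
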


\begin{proof}
	Let $\pi$ be a Dyck path of semilength $n$. Then $$ c(\rev(\pi)) = \{(n+1-j,n+1-i)\,|\,(i,j) \in c(\pi)\}.$$
	For a word $w = (w_1,\ldots,w_n) \in \ZZ^n_{>0}$ let $m = \max\{w_i|i\in [n]\}$ and $\widetilde{w}_{i} = m+1 - w_{n+1-i}$. Then $\widetilde{w} \in \ZZ_{>0}^n$. 
	If $(i,j) \in c(\pi)$ then $w_i > w_j$ if and only if $\widetilde{w}_{n+1-j} = m+1-w_j > \widetilde{w}_{n+1-i} = m+1-w_i$. 
	Therefore, $\inv(\pi,w) = \inv(\rev(\pi),\widetilde{w})$ and $x_{\widetilde{w}} = w_0(x_w)$, where $w_0$ is the longest permutation in $S_m$. Therefore, for $\alpha \in \ZZ_{>0}^n$, coefficient of $x^\alpha$ in $\chi(\pi,q,t)$ is same as coefficient of $x^{w_0(\alpha)}$ in $\chi(\rev(\pi),q,t)$, but since $\chi(\rev(\pi),q,t)$ is symmetric, it is same as coefficient of $x^{\alpha}$.   
\end{proof}

If $\pi$ is balanced, i.e, $\pi$ can be broken into a composition of blocks of the form $N^kE^k$ then we can calculate the change in the lowest and highest $t$-degree terms of $\overline{\chi}$ after a permutation of the blocks as follows.

\begin{lemma}\label{lem:barchisimpletransposition}
	Let $\pi = N^{\ell_1}E^{\ell_1}\cdot\ldots\cdot N^{\ell_n}E^{\ell_n}$. Suppose $\ell_i < \ell_{i+1}$. Let $\pi' = N^{\ell_1}E^{\ell_1}\cdot\ldots\cdot N^{\ell_{i+1}}E^{\ell_{i+1}}\cdot N^{\ell_i}E^{\ell_i}\cdot\ldots\cdot N^{\ell_n}E^{\ell_n}$ be the Dyck path with $i$th and $i+1$th blocks interchanged. Then $$ \overline{\chi}(\pi',q,0) = q\cdot \overline{\chi}(\pi,q,0), \qquad \hbox{ and } \qquad \overline{\chi}(\pi',q,t)\big|_{t^{\# c(\zeta(\pi'))}} = \overline{\chi}(\pi,q,t)\big|_{t^{\# c(\zeta(\pi))}}. $$ 
\end{lemma}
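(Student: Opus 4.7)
The plan is to use formula \eqref{eq:barchilowhigh} to rewrite both sides of each identity as dinv-generating functions over the filling sets $\WP_\pi(>)$ and $\WP_\pi(\leq)$, and then set up a content-swap bijection $\phi : \WP_\pi(\bullet) \to \WP_{\pi'}(\bullet)$ that exchanges the contents of blocks $i$ and $i+1$ and fixes the contents of all other blocks. Since $\phi$ trivially preserves the monomial $x_w$, the task reduces to comparing $\dinv(\pi,w)$ and $\dinv(\pi',\phi(w))$.

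The structural feature I would exploit is that in a balanced path each block $N^{\ell_k}E^{\ell_k}$ begins on the main diagonal, so its cells occupy exactly the diagonals $0,1,\ldots,\ell_k-1$. As a consequence, within-block pairs never contribute dinv (they share a column), and between-block dinv attacks occur only between same-diagonal cells or between adjacent diagonals under a specific left/right rule. Using this, I would verify that the dinv contribution from any pair with at most one cell in blocks $\{i,i+1\}$ is preserved by $\phi$: an external block $k$ sits entirely to the left or to the right of both $i$ and $i+1$, so the left/right orientation, and hence the dinv attack pattern, is unaffected when the contents of blocks $i$ and $i+1$ are swapped. The only change therefore comes from pairs with both cells in $\{i,i+1\}$.

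Denote the values of block $i$ in $\pi$ by $\alpha_1>\cdots>\alpha_a$ and those of block $i+1$ by $\beta_1>\cdots>\beta_b$. An explicit computation based on the diagonal structure gives the between-block dinv contributions $D_\pi(\alpha,\beta) = \sum_{m=1}^{a}[\alpha_m>\beta_m] + \sum_{m=1}^{a-1}[\beta_m>\alpha_{m+1}]$ and $D_{\pi'}(\beta,\alpha) = \sum_{m=1}^{a}[\beta_m>\alpha_m] + \sum_{m=1}^{a}[\alpha_m>\beta_{m+1}]$, the second sum now extending to $m=a$ precisely because $b>a$ ensures $\beta_{a+1}$ exists. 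The lemma then reduces to the two-block identity
\[ \sum_{(\alpha,\beta)} q^{D_{\pi'}(\beta,\alpha)} = q\cdot \sum_{(\alpha,\beta)} q^{D_\pi(\alpha,\beta)}, \]
summed over all partitions of a fixed multiset of size $a+b$ into strictly decreasing sequences of lengths $a$ and $b$, together with the analogous identity (without the $q$-shift) when the sequences are weakly increasing.

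The main obstacle is this two-block identity; a direct check shows that the per-filling shift $D_{\pi'}-D_\pi$ is not uniformly $+1$, so the factor of $q$ arises only after averaging. I would establish it by induction on $a+b$, using a recursion obtained by extracting the smallest (or largest) element of the multiset and analyzing the possibilities for its placement among the $\alpha$ and $\beta$ sequences. The extra indicator $[\alpha_a>\beta_{a+1}]$ present in $D_{\pi'}$ but absent in $D_\pi$ is exactly what produces the overall factor of $q$ after summation, whereas in the weakly-increasing analog this extra indicator is balanced by a cancellation in the induction step, yielding the equality without any $q$-shift.
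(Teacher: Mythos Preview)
Your reduction to a two-block identity is correct and matches the paper's: the observation that an external block $k$ lies uniformly to one side of $\{i,i+1\}$, so that its total dinv interaction with the pair is unchanged under any content-preserving rearrangement of those two blocks, is exactly the locality argument the paper uses. Your explicit formulas for $D_\pi(\alpha,\beta)$ and $D_{\pi'}(\beta,\alpha)$ are also correct.

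The divergence is at the two-block step. You adopt the naive content-swap, observe (correctly) that it does not shift dinv uniformly, and propose to recover the factor of $q$ by an induction on $a+b$ via extraction of the extremal element. This can be made to work, but the sketch hides real case analysis: one must separate $b=a+1$ from $b>a+1$ (the term $[\alpha_a>\beta_{a+1}]$ behaves differently), handle the symmetry $D_{\pi}^{(a,a)}\leftrightarrow D_{\pi'}^{(a,a)}$ that appears when $b$ drops to $a$, treat the degenerate subcase $a=1$ (where removing $\alpha_a$ empties a block), and deal with repeated smallest values, which force $v=\alpha_a=\beta_b$ simultaneously and change the recursion. None of this is visible in your outline, and the sentence about the ``extra indicator $[\alpha_a>\beta_{a+1}]$'' producing the factor of $q$ is a heuristic, not the mechanism by which the induction actually closes. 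The weakly-increasing case is even less specified.

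The paper takes a more direct route: rather than summing and inducting, it constructs a \emph{splice} bijection on pairs of column words that shifts dinv by exactly $+1$ per filling in the strictly decreasing case and by $0$ in the weakly increasing case. Given $(F,G)$ with $\len(F)=a<\len(G)=b$, one finds the minimal $m$ with $f_{a-m}>g_{a-m+1}$ (respectively $f_{a-m}\le g_{a-m+1}$) and swaps the tails below that level. A short case analysis on $m=0$ versus $m>0$ then verifies the uniform dinv shift. This replaces your averaging-by-induction with a filling-by-filling bijection, which is both cleaner and yields an explicit combinatorial correspondence between the two sides.
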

\begin{proof}
	Call a word $w = (w_1,\ldots,w_m)$ a column decreasing (resp. weakly increasing) word if $w_1>\ldots>w_m$ (resp. $w_1\leq \ldots\leq w_m$). Denote by $\len(w)$ the length $m$ of the word $w$.
	
	\item[(1)] 
	Recall from \eqref{eq:barchilowhigh} that $\overline{\chi}(\pi,q,0)$ is a sum over $\WP_\pi(>)$, which means we have a $n$-tuple of column decreasing words of lengths $\ell_1,\ldots,\ell_n$.
	
	We recall the splice operation from \cite{BRVqWhittaker} on a pair of column decreasing words $(F,G)$ where $\len(F) < \len(G)$. Let $F = (f_1,\ldots,f_a), G=(g_1,\ldots,g_b)$ with $a<b$. Let $f_0 = \infty$ and $m = \min\{j \in \{0,\ldots,a\}\,|\, f_{a-j}>g_{a-j+1}\}$. Then define $\widetilde{F} = (f_1,\ldots,f_{a-m},g_{a-m+1},\ldots,g_b)$ and $\widetilde{G} = (g_1,\ldots,g_{a-m},f_{a-m+1},\ldots,f_a)$. By definition $f_{a-m}>g_{a-m+1}$ so $\widetilde{F}$ is decreasing. If $m > 0$ then the lengths of $F$ and $G$ differ by atleast $1$, so $b \geq a-m+2$. Since $m$ was minimum, $f_{a-m+1}\leq g_{a-m+2}<g_{a-m}$ so $\widetilde{G}$ is decreasing. The operation $(F,G) \mapsto (\widetilde{F},\widetilde{G})$ therefore defines a map on a pair of column decreasing words with $\len(F) = \len(\widetilde{G}) < \len(G) = \len(\widetilde{F})$.

	Let $w \in \WP_\pi(>)$. This determines a tuple of column decreasing words $(F_1,\ldots,F_n)$. Perform the splice operation on the pair $(F_i,F_{i+1})$. Since splice does not change which diagonal a particular entry belongs to, the only difference in $\dinv$ is contributed from the two column decreasing words $F_i$ and $F_{i+1}$. Hence, it suffices to consider the case when $\pi$ has only two bounce blocks, i.e, $n = 2$, with $\ell_1 = a$ and $\ell_2 = b$ and $a<b$. We will analyze the difference in $\dinv$ from the pair $(F,G)$ and $(\widetilde{F},\widetilde{G})$. As before let $m = \min\{j \in \{0,\ldots,a\}\,|\, f_{a-j}>g_{a-j+1}\}$.   
	
%	Let the word in the $i$th and $i+1$th column from bottom to top be $F = (f_1>\ldots>f_{\ell_i})$ and $G = (g_1>\ldots>g_{\ell_{i+1}})$ respectively. Let $f_0 = \infty$ and $m = \min\{ j \in \{0,\ldots, \ell_i\} | f_{\ell_i-j}>g_{\ell_i-j+1} \}$. Define $\widetilde{F} = (f_1,\ldots,f_{\ell_i-m},g_{\ell_i-m+1},\ldots,g_{\ell_{i+1}})$ and $\widetilde{G} = (g_1,\ldots,g_{\ell_i-m},f_{\ell_i-m+1},\ldots,f_{\ell_i})$. By definition $f_{\ell-m} > g_{\ell-m+1}$ so  $\widetilde{F}$ is decreasing. Since $m$ was minimum, $f_{\ell_i-m+1} \leq g_{\ell_i-m+2} < g_{\ell-m}$ so  $\widetilde{G}$ is also decreasing. 
	
%	Note that since the splice operation does not change which diagonal a particular entry belongs to, hence the only difference in $\dinv$ is contributed from the two columns $F$ and $G$. 
	
	Case 1: $m = 0$. Then $\widetilde{F} = (f_1,\ldots,f_{a},g_{a+1},\ldots,g_{b})$ and $\widetilde{G} = (g_1,\ldots,g_{a})$, so one new dinv is introduced by the inequality $g_{a} > g_{a+1}$.
	
	\begin{figure}[h]
		\begin{minipage}{0.45 \textwidth}
			\begin{align*}
				\ytableausetup{boxframe=normal,boxsize=2em}
				\begin{ytableau}
					\none & g_b \\ \none & \none[\vdots] \\ \none & g_{a+1} \\ f_a & g_a \\ \none[\vdots] & \none[\vdots] \\ f_1 & g_1
				\end{ytableau}
				\qquad \qquad
				\begin{ytableau}
					g_b & \none \\ \none[\vdots] & \none \\ g_{a+1} & \none \\ f_a & g_a \\ \none[\vdots] & \none[\vdots] \\ f_1 & g_1
				\end{ytableau}
			\end{align*}
			\subcaption{$m=0$}
		\end{minipage}
		\hfill
		\begin{minipage}{0.45 \textwidth}
			\begin{align*}
			\ytableausetup{boxframe=normal,boxsize=3.5em}
			\begin{ytableau}
				\none & g_b \\ \none & \none[\vdots] \\ f_a & g_a \\ \none[\vdots] & \none[\vdots] \\ f_{a-m+1} & g_{a-m+1} \\ f_{a-m} & g_{a-m} \\ \none[\vdots] & \none[\vdots] \\ f_1 & g_1
			\end{ytableau}
			\qquad \qquad
			\begin{ytableau}
				g_b & \none \\ \none[\vdots] & \none \\ g_a & f_a \\ \none[\vdots] & \none[\vdots] \\ g_{a-m+1} & f_{a-m+1} \\ f_{a-m} & g_{a-m} \\ \none[\vdots] & \none[\vdots] \\ f_1 & g_1
			\end{ytableau}
			\end{align*}
			\subcaption{$m>0$}
		\end{minipage}
		\caption{}
	\end{figure}
		
	Case 2: $m > 0$. Since the lengths of $F$ and $G$ differ by atleast $1$, $b\geq a-m+2$. Since $m$ is minimum, $g_{a-m+i} > g_{a-m+i+1} \geq f_{a-m+i}$ for $i = 1,\ldots,m$. So $g_{a-m+i} > f_{a-m+i}$ for $i = 1,\ldots, m$ are dinvs for $(\widetilde{F},\widetilde{G})$. The dinvs $g_{a-m+i-1} > f_{a-m+i}$ for $i = 1,\ldots,m$ in $(F,G)$ are replaced by the dinvs $g_{a-m+i} > f_{a-m+i}$ for $i = 1,\ldots, m$  for $(\widetilde{F},\widetilde{G})$. The dinv $g_{a-m} > g_{a-m+1}$ for $(\widetilde{F},\widetilde{G})$ is extra.  
	
	Since both cases we have $1$ extra dinv, this proves the first statement. 
	
	\item[(2)] 
	Note that by \eqref{eq:czetapi}, $\# c(\zeta(\pi')) = \# c(\zeta(\pi)) = \# \{r\,|\, x_r(\pi) = x_{r+1}(\pi)\}$.
	Recall from \eqref{eq:barchilowhigh} that $\overline{\chi}(\pi,q,t)\big|_{t^{\# c(\zeta(\pi))}}$ is a sum over $\WP_\pi(\leq)$, which means we have a $n$-tuple of column weakly increasing words of lengths $\ell_1,\ldots,\ell_n$. 
	
	We use a variation of the above splice map. Let $F = (f_1 \leq \ldots \leq f_a), G = (g_1 \leq \ldots \leq g_b)$ with $a < b$. Let $f_0 = 0$ and $m = \min\{j\in \{0,\ldots,a\}\,|\, f_{a-j} \leq g_{a-j+1}\}$. Then define $\widetilde{F} = (f_1,\ldots,f_{a-m},g_{a-m+1},\ldots,g_b)$ and $\widetilde{G} = (g_1,\ldots,g_{a-m},f_{a-m+1},\ldots,f_a)$. By definition $f_{a-m} \leq g_{a-m+1}$ so $\widetilde{F}$ is weakly increasing. Since $m$ was minimum, $f_{a-m+1} > g_{a-m+2} \geq g_{a-m}$ so $\widetilde{G}$ is weakly increasing. The operation $(F,G) \mapsto (\widetilde{F},\widetilde{G})$ therefore defines a map on a pair of column weakly increasing words with $\len(F) = \len(\widetilde{G}) < \len(G) = \len(\widetilde{F})$. 
	
	As before, the change in dinv is only on the pair of columns on which splice is applied, and it suffices to consider $\pi$ with two bounce blocks and two column weakly increasing words $F$ and $G$ as above. 
	
	Case 1: $m = 0$. Then $\widetilde{F} = (f_1,\ldots,f_a,g_{a+1},\ldots,g_b)$ and $\widetilde{G} = (g_1,\ldots,g_a)$. Since $g_{a} \leq g_{a+1}$, no new dinv is produced.
	
	Case 2: $m>0$. Then $f_{a-m+i}>g_{a-m+i+1}$ for $i \in \{1,\ldots,m\}$ are dinvs for $(\widetilde{F},\widetilde{G})$ which are not present in $(F,G)$. If $f_{a-m+i} \leq g_{a-m+i}$ then since $G$ is weakly increasing, $f_{a-m+i} \leq g_{a-m+i+1}$, a contradiction. Hence $f_{a-m+i} > g_{a-m+i}$ for $i \in \{1,\ldots,m\}$, and these are dinvs of $(F,G)$ which are not present in $(\widetilde{F},\widetilde{G})$. Hence the total dinv is constant. 
	
	Since both cases we have no change in dinv the second statement is proved.

\end{proof}

We can now prove the equality between the right hand sides of \eqref{eq:HHL} and \eqref{eq:AMM}.

\begin{theorem}\label{th:mainth}
	Let $\lambda \in \Par$. Then
	\begin{align*}
		q^{-\alpha_\Quinv(\lambda)}\chi(\pi_\lambda^\Quinv,q,0) = q^{-\alpha_\Inv(\lambda)} \chi( \pi_\lambda^\Inv,q,0),
		\\
		\chi(\pi_\lambda^\Inv,q,t)\big|_{t^{\# c(\pi_\lambda^\Inv)}} = \chi(\pi_\lambda^\Quinv,q,t)\big|_{t^{\# c(\pi_\lambda^\Quinv)}}.
	\end{align*}
\end{theorem}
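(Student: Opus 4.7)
The plan is to chain together the machinery already assembled in the excerpt: translate both sides into the $\overline{\chi}$ formalism via \autoref{prop:revzetapilam} and \autoref{lem:chirev}, then use the block-swap \autoref{lem:barchisimpletransposition} together with the enumeration in Lemmas \ref{lem:alpQ-alpI} and \ref{lem:revlength} to account for the $q$-power discrepancy.

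\textbf{Step 1: Reduce to $\overline{\chi}$ of balanced paths.} By \autoref{prop:revzetapilam}, $\pi_\lambda^\Inv = \rev\circ\zeta(\pi_\lambda)$ and $\pi_\lambda^\Quinv = \rev\circ\zeta\circ\rev(\pi_\lambda)$. Applying $\chi(-,q,t)$ and using $\chi(\rev(\pi),q,t) = \chi(\pi,q,t)$ from \autoref{lem:chirev}, together with the definition $\overline{\chi}(\pi,q,t)=\chi(\zeta(\pi),q,t)$ from \eqref{eq:barchidef}, yields the two identities
\begin{equation*}
  \chi(\pi_\lambda^\Inv,q,t) = \overline{\chi}(\pi_\lambda,q,t), \qquad \chi(\pi_\lambda^\Quinv,q,t) = \overline{\chi}(\rev(\pi_\lambda),q,t).
\end{equation*}
Note also that $\#c(\pi_\lambda^\Inv)=\#c(\zeta(\pi_\lambda))$ and $\#c(\pi_\lambda^\Quinv)=\#c(\zeta(\rev(\pi_\lambda)))$ by \autoref{lem:chirev} applied at the level of corners, so the top-degree extractions translate faithfully.

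\textbf{Step 2: Interpret $\pi_\lambda$ and $\rev(\pi_\lambda)$ as balanced block sequences.} Writing $\lambda'=(n^{m'_n},\ldots,1^{m'_1})$, the path $\pi_\lambda = N^{\lambda'_k}E^{\lambda'_k}\cdots N^{\lambda'_1}E^{\lambda'_1}$ is balanced with block sizes in weakly increasing order left-to-right, namely $1^{m'_1},2^{m'_2},\ldots,n^{m'_n}$. Its reversal $\rev(\pi_\lambda) = N^{\lambda'_1}E^{\lambda'_1}\cdots N^{\lambda'_k}E^{\lambda'_k}$ has the same blocks in weakly decreasing order. Hence $\rev(\pi_\lambda)$ is reached from $\pi_\lambda$ by a sequence of adjacent block swaps, and by \autoref{lem:revlength} the minimal number of such swaps (swaps of strictly smaller-before-larger pairs) equals $\sum_{i>j}m'_im'_j$.

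\textbf{Step 3: Apply \autoref{lem:barchisimpletransposition} along the sequence of swaps.} Each such swap multiplies $\overline{\chi}(-,q,0)$ by $q$ and leaves $\overline{\chi}(-,q,t)|_{t^{\#c(\zeta(-))}}$ unchanged. Iterating over the $\sum_{i>j}m'_im'_j$ swaps gives
\begin{equation*}
  \overline{\chi}(\rev(\pi_\lambda),q,0) = q^{\sum_{i>j}m'_im'_j}\,\overline{\chi}(\pi_\lambda,q,0), \qquad \overline{\chi}(\rev(\pi_\lambda),q,t)\big|_{\text{top}} = \overline{\chi}(\pi_\lambda,q,t)\big|_{\text{top}}.
\end{equation*}

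\textbf{Step 4: Conclude.} By \autoref{lem:alpQ-alpI} the exponent $\sum_{i>j}m'_im'_j$ is exactly $\alpha_\Quinv(\lambda)-\alpha_\Inv(\lambda)$. Substituting back via Step 1 immediately yields
\begin{equation*}
  q^{-\alpha_\Quinv(\lambda)}\chi(\pi_\lambda^\Quinv,q,0) = q^{-\alpha_\Inv(\lambda)}\chi(\pi_\lambda^\Inv,q,0),
\end{equation*}
and the top-degree equality follows directly. The main obstacle is bookkeeping: verifying that the direction of the swap (smaller-before-larger, as in the hypothesis of \autoref{lem:barchisimpletransposition}) matches the direction one traverses when moving from $\pi_\lambda$ to $\rev(\pi_\lambda)$, so that the $q$ factor accumulates with the correct sign and the count matches $\sum_{i>j}m'_im'_j$ precisely. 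Everything else is a direct substitution of identities already proved in the excerpt.
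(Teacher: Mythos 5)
Your proposal is correct and follows essentially the same route as the paper: reduce both $\chi(\pi_\lambda^\Inv,q,t)$ and $\chi(\pi_\lambda^\Quinv,q,t)$ to $\overline{\chi}$ of the balanced paths $\pi_\lambda$ and $\rev(\pi_\lambda)$ via \autoref{prop:revzetapilam}, \autoref{lem:chirev} and \eqref{eq:barchidef}, then accumulate the factor $q^{\sum_{i>j}m'_im'_j}$ through the adjacent block swaps of \autoref{lem:barchisimpletransposition}, identified with $\alpha_\Quinv(\lambda)-\alpha_\Inv(\lambda)$ by Lemmas \ref{lem:revlength} and \ref{lem:alpQ-alpI}. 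Your bookkeeping of the swap direction and the corner counts is consistent with the paper's, which phrases the same chain starting from $\rev\circ\zeta^{-1}\circ\rev(\pi_\lambda^\Inv)$ instead of $\pi_\lambda$.
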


\begin{proof}
	Let $m'_i = m_i(\lambda')$ be the multiplicity of $i$ in $\lambda'$.
	
	Since by \autoref{prop:revzetapilam},
	$$\rev \circ \zeta^{-1} \circ \rev(\pi_\lambda^\Inv) = N^{\lambda'_1}E^{\lambda'_1}\cdot\ldots\cdot N^{\lambda'_m}E^{\lambda'_m},$$ then by \autoref{lem:barchisimpletransposition} and \autoref{lem:revlength}, $$ \overline{\chi}(\rev \circ \zeta^{-1} \circ \rev (\pi_\lambda^\Inv),q,0) = q^{\sum_{i>j} m'_im'_j } \cdot \overline{\chi}( \zeta^{-1} \circ \rev (\pi_\lambda^\Inv),q,0),$$ where $m'_i = m_i(\lambda')$.
Then using \autoref{res:pathInvQuinvrel}, \autoref{lem:chirev} and \eqref{eq:barchidef}, \begin{align*}
		\chi(\pi_\lambda^\Quinv,q,0) &= \chi(\rev\circ \zeta \circ \rev \circ \zeta^{-1} \circ \rev (\pi_\lambda^\Inv),q,0) = \chi(\zeta \circ \rev \circ \zeta^{-1} \circ \rev (\pi_\lambda^\Inv),q,0)  \\&= \overline{\chi}(\rev \circ \zeta^{-1} \circ \rev (\pi_\lambda^\Inv),q,0) = q^{\sum_{i>j} m'_im'_j } \cdot \overline{\chi}( \zeta^{-1} \circ \rev (\pi_\lambda^\Inv),q,0) \\&= q^{\sum_{i>j} m'_im'_j } \cdot \chi(  \rev (\pi_\lambda^\Inv),q,0) = q^{\sum_{i>j} m'_im'_j } \cdot \chi( \pi_\lambda^\Inv,q,0).
	\end{align*}
	The first statement now follows from \autoref{lem:alpQ-alpI}. 

By \eqref{eq:cornercard}, $\# c(\pi_\lambda^\Quinv) = \# c(\pi^\Inv_\lambda) = |\lambda|-\lambda_1$. Using \autoref{res:pathInvQuinvrel}, \autoref{lem:chirev}, \eqref{eq:barchidef} and \autoref{lem:barchisimpletransposition},
	\begin{align*}
		\chi(\pi_\lambda^\Quinv,q,t)\big|_{t^{|\lambda|-\lambda_1}} &= \chi(\rev\circ\zeta \circ \rev \circ \zeta^{-1} \circ \rev (\pi_\lambda^\Inv),q,t)\big|_{t^{|\lambda|-\lambda_1}} \\&= \chi(\zeta \circ \rev \circ \zeta^{-1} \circ \rev (\pi_\lambda^\Inv),q,t)\big|_{t^{|\lambda|-\lambda_1}} \\&= \overline{\chi}(\rev \circ \zeta^{-1} \circ \rev (\pi_\lambda^\Inv),q,t)\big|_{t^{|\lambda|-\lambda_1}} = \overline{\chi}( \zeta^{-1} \circ \rev (\pi_\lambda^\Inv),q,t)\big|_{t^{|\lambda|-\lambda_1}} \\&= \chi(  \rev (\pi_\lambda^\Inv),q,t)\big|_{t^{|\lambda|-\lambda_1}} = \chi( \pi_\lambda^\Inv,q,t)\big|_{t^{|\lambda|-\lambda_1}}.
	\end{align*}
\end{proof}

\section{Remarks on Schur positivity}

For a partition $\lambda$, the symmetric functions $\chi(\pi^\Inv_\lambda,q,t)$ and $\chi(\pi^\Quinv_\lambda,q,t)$ specialize to the $q$-Whittaker and modified Hall-Littlewood functions at lowest $t$-degree and highest $t$-degree respectively, this is analogous to the modified Macdonald functions $H_\lambda(q,t)$. We know that the modified Macdonald functions are Schur positive. As explained in \cite[Remark 3.6]{Carlsson-Mellit-Shuffle}, $\chi(\pi,q,0)$ and $\chi(\pi,q,1)$ are both examples of LLT polynomials, which are known to be Schur positive. Is $\chi(\pi,q,t)$ is Schur-positive for any Dyck path $\pi$? We have verified that this is the case for all Dyck paths of semilength $\leq 9$. If $\pi$ is a Dyck path of semilength $n$, then  \begin{equation}
	\chi(\pi,1,1) = s_{1}^n.
\end{equation}  Therefore, there should be statistics $\mathrm{q stat}$ and $\mathrm{t stat}$ on the set of standard Young tableaux such that $$ \chi(\pi,q,t) = \sum_{T \in \mathrm{SYT}(\lambda)} q^{\mathrm{q stat}(\pi,T)}t^{\mathrm{t stat}(\pi,T)}s_{\mathrm{shape}(T)}.$$

\begin{example}
	If $\pi = (NE)^n$, then \begin{align*}
		\chi(\pi,q,t) &= \sum_{w \in \ZZ_{>0}^n} t^{\# \{(i,j) \in c(\pi)| w_i \leq w_j\}}x_w = \sum_{w \in \ZZ_{>0}^n} t^{\# \{i \in [n-1] | w_i \leq w_{i+1}\}}x_w \\& = \sum_{w \in \ZZ_{>0}^n} t^{n-1-\# \{i \in [n-1] | w_i > w_{i+1}\}}x_w = \sum_{w \in \ZZ_{>0}^n} t^{n-1 - \mathrm{des}(w)} x_w.
	\end{align*}
	
	For a standard Young tableau $T$, let $\mathrm{des}(T)$ be the number of $i$ such that $i+1$ occurs in a row strictly below $i$ in $T$ (English notation). Then using RSK and row bumping lemma \cite{Fulton_ytableauxbook} we get
	$$ \sum_{w \in \ZZ_{>0}^n} t^{\mathrm{des}(w)}x_w = \sum_{\lambda \vdash n}s_\lambda \sum_{T \in \mathrm{SYT}(\lambda)} t^{\mathrm{des}(T)}.$$
	
	Then $$ \chi(\pi,q,t) = \sum_{\lambda \vdash n}s_\lambda \sum_{T \in \mathrm{SYT}(\lambda)} t^{n-1-\mathrm{des}(T)}.$$
\end{example}

\begin{example}
	If $\pi = N^nE^n$, then \begin{align*}
		\chi(\pi,q,t) &= \sum_{w \in \ZZ_{>0}^n} q^{\# \{1 \leq i < j \leq n \, |\, w_i>w_j\}}x_w = \sum_{w \in \ZZ_{>0}^n} q^{\inv(w)}x_w \\&= \widetilde{H}_{(n)}(q,t) = \dfrac{h_n\bigg[\dfrac{X}{1-q}\bigg]}{h_n\bigg[\dfrac{1}{1-q}\bigg]} = \sum_{\lambda \vdash n}s_\lambda \sum_{T \in \mathrm{SYT}(\lambda)} q^{\maj(T)},
	\end{align*} where $\maj(T) = \sum_{i \in \mathrm{des}(T)}i$. 
\end{example}	

%Since the Schur functions are unitriangular with respect to the monomial symmetric functions and $(n)$ is the maximum in dominance order on partitions of $n$, 

Now we calculate the Schur coefficients corresponding to single row and single column partitions. Firstly, \begin{equation}
	\langle \chi(\pi,q,t),s_{(n)} \rangle = \langle \chi(\pi,q,t),h_n \rangle = \chi(\pi,q,t)\big|_{m_{(n)}} = t^{\# c(\pi)}.
\end{equation}

If $S \subseteq c(\pi)$, let $\pi_S$ denote the path obtained by flipping the corners that are in $S$. Then \cite[(3.7)]{Carlsson-Mellit-Shuffle} can be generalized to get \begin{equation}
	\chi(\pi,q,t) = \sum_{S \subseteq c(\pi)} \bigg(\dfrac{qt-1}{q-1}\bigg)^{\# c(\pi)-S}\bigg(\dfrac{1-t}{q-1}\bigg)^{\# S} \chi(\pi_S,q,1).
\end{equation} 

For a symmetric function $F$ with coefficients in $\QQ(q,t)$ let $\overline{\omega}F[X] = \overline{F}[-X]$ in plethystic notation, where $\overline{F}$ is obtained by applying the $\QQ$-algebra homomorphism on coefficients: $\overline{q} = q^{-1}, \overline{t} = t^{-1}$. Then \cite[Proposition 3.4]{Carlsson-Mellit-Shuffle} gives for a Dyck path $\pi$ of semilength $n$, \begin{equation}
\overline{\omega}(\chi(\pi,q,1)) = (-1)^n q^{-\# \Area(\pi)} \chi(\pi,q,1).
\end{equation}

Combining the last two equations and using $$ \# \Area(\pi) = \# \Area(\pi_S) - \# S \qquad \hbox{ for all } S \subseteq c(\pi),$$  we get \begin{align}
	\overline{\omega}(\chi(\pi,q,t)) &= \sum_{S \subseteq c(\pi)} \bigg(\dfrac{q^{-1}t^{-1}-1}{q^{-1}-1}\bigg)^{\# c(\pi)-S}\bigg(\dfrac{1-t^{-1}}{q^{-1}-1}\bigg)^{\# S} \overline{\omega}(\chi(\pi_S,q,1))
	\nonumber
	\\
	&= \sum_{S \subseteq c(\pi)}\bigg(\dfrac{qt-1}{q-1}\bigg)^{\# c(\pi)-S}\bigg(\dfrac{1-t}{q-1}\bigg)^{\# S} t^{-c(\pi)} q^{\# S} q^{-\# \Area(\pi_S)}(-1)^n \chi(\pi_S,q,1)
	\nonumber
	\\
	&= q^{-\# \Area(\pi)}t^{-\# c(\pi)} (-1)^n \sum_{S \subseteq c(\pi)} \bigg(\dfrac{qt-1}{q-1}\bigg)^{\# c(\pi)-S}\bigg(\dfrac{1-t}{q-1}\bigg)^{\# S} \chi(\pi_S,q,1)
	\nonumber
	\\
	&= q^{-\# \Area(\pi)}t^{-\# c(\pi)} (-1)^n \chi(\pi,q,t). 
\end{align}

Then \begin{align}
	\langle \chi(\pi,q,t), s_{(1^n)} \rangle &= \overline{\langle \overline{\omega}(\chi(\pi,q,t)), \overline{\omega}(s_{(1^n)}) \rangle } 
	\nonumber
	\\
	&= \overline{\langle q^{-\# \Area(\pi)}t^{-\# c(\pi)} (-1)^n \chi(\pi,q,1), (-1)^ns_{(n)} \rangle } = q^{\# \Area(\pi)}.
\end{align}
	
%Therefore, for a Dyck path $\pi$ of semilength $n$, $$ \langle\chi(\pi,q,t),s_{(1^n)}\rangle = q^{\# \Area(\pi)}.$$

%
%\subsection{$e$ positivity}
%
%From the proof of \cite[Corollary 40]{Alexandersson_melting} we get that $H_\lambda(q+1)$ is $h$-positive for $\lambda \in \Par$. Applying $\omega$ involution we get that $W_\lambda(q+1)$ is $e$-positive for $\lambda \in \Par$. Is it true that $\chi(\pi,q+1,0)$ is $e$-positive? We have verified that this is the case for all Dyck paths of semilength $\leq 8$ .
%	
		
%\subsection{signed formulas}		
%		
%	If $S \subseteq c(\pi)$, let $\pi_S$ denote the path obtained by flipping the corners that are in $S$. Then \cite[(3.7)]{Carlsson-Mellit-Shuffle} can be generalized to get \begin{equation}
%		\chi(\pi,q,t) = \sum_{S \subseteq c(\pi)} \bigg(\dfrac{qt-1}{q-1}\bigg)^{\# c(\pi)-S}\bigg(\dfrac{1-t}{q-1}\bigg)^{\# S} \chi(\pi_S,q,1).
%	\end{equation} 
%	
%	In particular, $$ \chi(\pi,q,0) = \dfrac{1}{(q-1)^{\# c(\pi)}}\sum_{S \subseteq c(\pi)}  (-1)^{\# c(\pi)-S} \chi(\pi_S,q,1),$$ and  $$\chi(\pi,q,t)\big|_{t^{\# c(\pi)}} = \dfrac{1}{(q-1)^{\# c(\pi)}}\sum_{S \subseteq c(\pi)} q^{\# c(\pi) - S}(-1)^{\# S} \chi(\pi_S,q,1) .$$ Note that $\chi(\pi,q,1)$ is multiplicative w.r.t Dyck path concatenation.

\bibliographystyle{alpha}
\bibliography{pathsym}

\begin{thebibliography}{AMM23}

\bibitem[AMM23]{AMM-I}
Arvind Ayyer, Olya Mandelshtam, and James~B. Martin.
\newblock Modified {Macdonald} polynomials and the multispecies zero-range
  process. {I}.
\newblock {\em Algebr. Comb.}, 6(1):243--284, 2023.

\bibitem[Ber20]{BergeronqWhittaker}
F.~Bergeron.
\newblock A {Survey} of $q$-{Whittaker} polynomials.
\newblock Preprint, {arXiv}:2006.12591, 2020.

\bibitem[BRV24]{BRVqWhittaker}
Aritra Bhattacharya, T~V Ratheesh, and Sankaran Viswanath.
\newblock $q$-whittaker polynomials: bases, branching and direct limits.
\newblock Preprint, {arXiv}:2412.00116, 2024.

\bibitem[CM18]{Carlsson-Mellit-Shuffle}
Erik Carlsson and Anton Mellit.
\newblock A proof of the shuffle conjecture.
\newblock {\em J. Am. Math. Soc.}, 31(3):661--697, 2018.

\bibitem[Ful97]{Fulton_ytableauxbook}
William Fulton.
\newblock {\em Young tableaux. {With} applications to representation theory and
  geometry}, volume~35 of {\em Lond. Math. Soc. Stud. Texts}.
\newblock Cambridge: Cambridge University Press, 1997.

\bibitem[Hag08]{HaglundqtCatbook}
James Haglund.
\newblock {\em The {{\(q,t\)}}-{Catalan} numbers and the space of diagonal
  harmonics. {With} an appendix on the combinatorics of {Macdonald}
  polynomials}, volume~41 of {\em Univ. Lect. Ser.}
\newblock Providence, RI: American Mathematical Society (AMS), 2008.

\bibitem[HHL05]{HHL-I}
J.~Haglund, M.~Haiman, and N.~Loehr.
\newblock A combinatorial formula for {M}acdonald polynomials.
\newblock {\em J. Amer. Math. Soc.}, 18(3):735--761, 2005.

\bibitem[HX17]{Haglund-Xin-shuffle}
James Haglund and Guoce Xin.
\newblock Lecture notes on the {Carlsson}-{Mellit} proof of the shuffle
  conjecture.
\newblock Preprint, {arXiv}:1705.11064, 2017.

\bibitem[Mac95]{MacMainBook}
I.~G. Macdonald.
\newblock {\em Symmetric functions and {H}all polynomials}.
\newblock Oxford Mathematical Monographs. The Clarendon Press, Oxford
  University Press, New York, second edition, 1995.
\newblock With contributions by A. Zelevinsky, Oxford Science Publications.

\end{thebibliography}

%\section{Leftovers}
%
%\begin{conjecture}
%	For $\lambda \in \Par$, the bounce path of $\pi^\Inv_\lambda$ and $\pi^\Quinv_\lambda$ is $\pi_{\lambda'}$.
%\end{conjecture}

\end{document}